\documentclass{article}
\usepackage{amssymb,amsmath}
\usepackage{graphics}
\usepackage{CJK}

\textheight 8.5in
\textwidth 6 in
\oddsidemargin 0.25in
\topmargin 0in

\def\qed{\hfill {\hbox{${\vcenter{\vbox{               
   \hrule height 0.4pt\hbox{\vrule width 0.4pt height 6pt
   \kern5pt\vrule width 0.4pt}\hrule height 0.4pt}}}$}}}

\def\tr{\triangleright}

\newtheorem{theorem}{Theorem}
\newtheorem{definition}{Definition}
\newtheorem{lemma}[theorem]{Lemma}
\newtheorem{proposition}[theorem]{Proposition}

\newtheorem{example}{Example}

\newtheorem{remark}[example]{Remark}

\newenvironment{proof}[1][Proof]{\smallskip\noindent{\bf #1.}\quad}%
{\qed\par\medskip}

\date{}

\title{\Large \textbf{Hom Quandles}}

\author{Alissa S. Crans\footnote{\texttt{acrans@lmu.edu}} 
\and Sam Nelson\footnote{\texttt{knots@esotericka.org}}} 

\begin{document}
\maketitle

\begin{abstract}
If $A$ is an abelian quandle and $Q$ is a quandle, the hom set $\mathrm{Hom}(Q,A)$ of quandle homomorphisms from $Q$ to $A$ has a natural quandle structure. We exploit this fact to enhance the 
quandle counting invariant, providing an example of links with the same 
counting invariant values but distinguished by the hom quandle structure. We 
generalize the result to the case of biquandles, collect observations
and results about abelian quandles and the hom quandle, and show that the category of abelian quandles is symmetric monoidal closed.\end{abstract}

\textsc{Keywords:} Quandles, biquandles, abelian quandles, abelian biquandles,
enhancements of counting invariants

\textsc{2010 MSC:} 57M27, 57M25

\section{\large \textbf{Introduction}}
Ronnie Brown said it best when declaring, ``One of the irritations of group theory is that the set Hom($H$,$K$) of homomorphisms between groups $H$ and $K$ does not have a natural group structure."  Of course, when $H$ and $K$ are both commutative, we know that Hom($H$, $K$) is also a commutative group.  
Since quandles are algebraic structures having groups as their primordial example, it is natural to wonder when, if ever, the set of quandle homomorphisms from a quandle $X$ to a quandle $X'$, $\mathrm{Hom}(X,X')$,  possesses additional structure.  It was shown
in \cite{I} that if $Q(K)$ is the fundamental quandle of a knot and $X$ is
an Alexander quandle, the set $\mathrm{Hom}(Q(K),X)$ has an Alexander quandle structure.  We 
generalize this 
result to show that the set $\mathrm{Hom}(Q(K),X)$ has a quandle structure
provided the target quandle $X$ is an `abelian', or `medial', quandle. Moreover,
for links with two or more components, the resulting quandle structure is 
not determined by the cardinality of the target quandle and is thus a 
stronger invariant than the counting invariant $|\mathrm{Hom}(Q(K),X)|$. 

This
paper is organized as follows. In Section \ref{QB} we recall the basics of
quandles, including definitions and examples. In Section \ref{AQ} we turn our focus to the case 
of abelian quandles, also called medial quandles. Then, in Section
\ref{HQ}, specifically in Proposition \ref{p:main}, we define a natural quandle structure on the set of quandle
homomorphisms from an arbitrary quandle to a finite abelian quandle.
 We continue in this section by illustrating properties of the hom quandle, \textrm{\bf Hom}($Q$,$A$), including those inherited from the quandle $A$.
In Section \ref{HQE} we apply the results of Section \ref{HQ} to 
define an enhanced invariant of links associated to finite abelian quandles and in
Section \ref{BQ} we generalize the results of previous sections to the 
case of biquandles. In Section \ref{C} we consider the category of abelian 
quandles and show that it is symmetric monoidal closed, and we conclude in Section \ref{Q} 
with some questions for future work.

\section{\large \textbf{Acknowledgements}}
We thank Tom Leinster for insightful and helpful discussions related to Section \ref{C} and James McCarron for pointing out an error in the initially posted
version.  We are also grateful for useful conversations with Lou Kauffman and David Radford when this work was in the beginning stages.  

\section{\large \textbf{Quandle Basics}}\label{QB}

We begin with a definition from \cite{J}.

\begin{definition}
\textup{A \textit{quandle} is a set $X$ equipped with a binary operation 
$\tr:Q\times X\to X$ satisfying
\begin{list}{}{}
\item[(i)]{ (idempotence) for all $x\in X$, $x\tr x = x$,}
\item[(ii)]{(inverse) for all $x,y\in X$, there is a unique $z\in X$ with $x=z\tr y$, and}
\item[(iii)]{(self-distributivity) for all $x,y,z\in X$, $(x\tr y)\tr z=(x\tr z)\tr (y\tr z).$}
\end{list}}
\end{definition}

The quandle axioms capture the essential properties of group conjugation, and correspond to the Reidemeister moves on oriented link diagrams
with elements corresponding to arcs and the quandle operation $\tr$ 
corresponding to crossings with $x\tr y$ the result of $x$ crossing under 
$y$ from right to left. In particular, the operation is distinctly 
non-symmetrical -- in $x\tr y$, $y$ is acting on $x$ and not conversely -- 
and thus it makes sense to use a non-symmetrical symbol like $\tr$. 

Axiom (i) says that every element of $X$ is idempotent under $\tr$. Axiom (ii) 
says that the action of $y$ on $X$ defined by $f_y(x)=x\tr y$ is 
bijective for every $y\in X$. Hence there are inverse actions, denoted by 
$f_y^{-1}(x)=x\tr^{-1}y$, and Axiom (ii) is equivalent to 
\begin{list}{}{}
\item[(ii$'$)] there is an \textit{inverse}, or {\it dual}, operation $\tr^{-1}:X\times X\to X$ 
satisfying for all $x,y\in X$
\[(x\tr y)\tr^{-1} y =x = (x\tr^{-1} y)\tr y.\] 
\end{list}
Thus, we can eliminate the existential quantifier in Axiom (ii) at the cost
of adding a second operation. It is a straightforward exercise to show that
the inverse operation is also idempotent and self-distributive, so $X$ is 
a quandle under $\tr^{-1}$, known as the \textit{dual quandle} of $(X,\tr)$.
One can also show that the two triangle operations distribute over each other,
i.e. we have
\[(x\tr y)\tr^{-1} z= (x\tr^{-1} z)\tr (y\tr^{-1} z)\quad\mathrm{and}\quad
(x\tr^{-1} y)\tr z= (x\tr z)\tr^{-1} (y\tr z).
\]

Axiom (iii) says that the quandle operation is self-distributive. This axiom
then implies that the action maps $f_y:X\to X$ are endomorphisms of the 
quandle structure:
\[f_z(x\tr y)= (x\tr y)\tr z = (x\tr z)\tr (y\tr z) =f_z(x)\tr f_z(y).\]
Indeed, Axioms (ii) and (iii) together say that the action of any element $y\in X$
is always an automorphism of $X$. If $(X,\tr)$ satisfies only  (ii) and (iii), then 
$X$ is called a \textit{rack} or \textit{automorphic set}; a quandle is then a rack 
in which every element is a fixed point of its own action.

A quandle is {\it involutory} if $\tr =\tr^{-1}$, i.e. if for all $x$ and $y$ 
in $X$ we have 
$(x \rhd y) \rhd y = x$.  Involutory quandles are also known as {\it kei} or
\begin{CJK*}{UTF8}{min}圭\end{CJK*}; see \cite{NP,T} for more details.

\begin{example}
\textup{Any module over $\mathbb{Z}[t^{\pm 1}]$ is a quandle with operations}
\[x\tr y=tx+(1-t)y \quad \mathrm{and}\quad 
x\tr^{-1} y=t^{-1}x+(1-t^{-1})y.\]
\textup{Such a quandle is called an \textit{Alexander quandle}.
Any module over $\mathbb{Z}[t]/(t^2-1)$ is a kei under 
\[x\tr y= tx+(1-t)y\]
known as an \textit{Alexander kei}.}
\end{example}

\begin{example}
\textup{Any vector space $V$ over a field $k$ 
is a quandle under the operations }
\[\vec{x}\tr\vec{y}=\vec{x}+
\langle\vec{x},\vec{y} \rangle \vec{y}
\quad\mathrm{and}\quad
\vec{x}\tr^{-1}\vec{y}=\vec{x}-
\langle\vec{x},\vec{y} \rangle \vec{y}\]
\textup{where $\langle,\rangle:V\times V\to k$ is an antisymmetric
bilinear form (if the characterisitic of $k$ is 2, then we also require
$\langle \vec{x},\vec{x}\rangle=0$ for all $\vec{x}\in V$). Such a quandle 
is called a \textit{symplectic quandle} \cite{NN}.}
\end{example}

As briefly mentioned already, the quandle axioms can be understood as arising from the oriented Reidemeister
moves where quandle elements are associated to arcs in a knot or link diagram
and the quandle operation $x\tr y$ is interpreted as arc $x$ crossing under arc
$y$ from the right. We note that the orientation of the undercrossing arc is not
relevant, but only the orientation of the overcrossing arc. The inverse triangle 
operation from Axiom (ii$'$) can be interpreted as the understrand crossing 
backwards from left to right as illustrated below:
\[\includegraphics{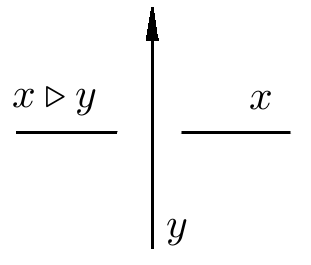}\quad \includegraphics{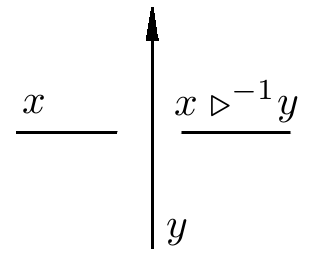}
\]

\noindent Then the quandle axioms are exactly the conditions required for the diagrams
to match up one-to-one before and after the Reidemeister moves.
\[\includegraphics{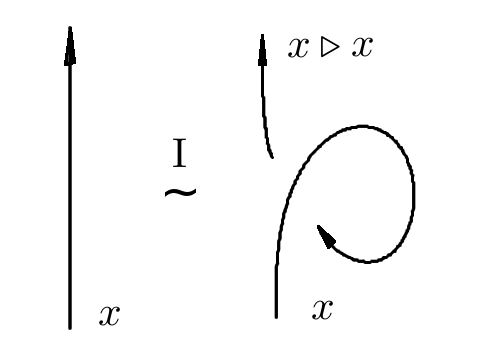} \quad \includegraphics{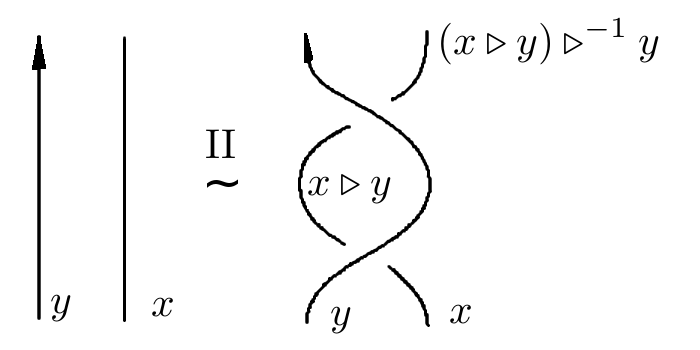}\]
\[\includegraphics{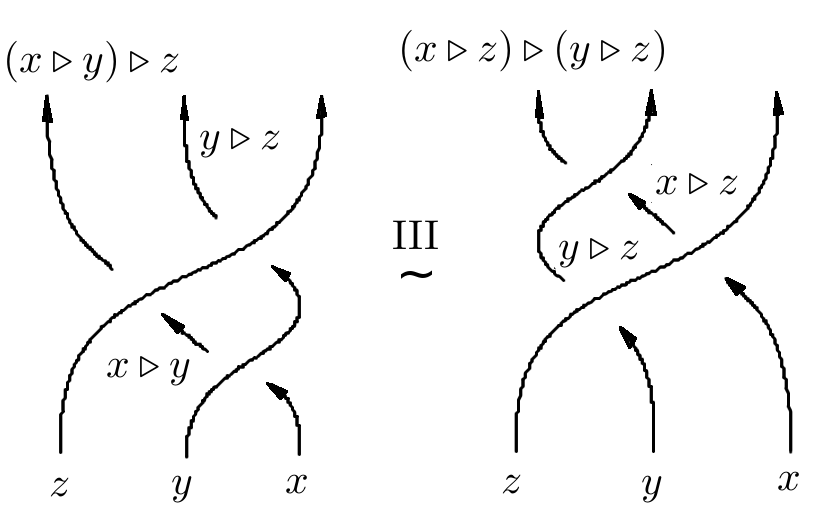}\]

Given an oriented knot or link $K$, the \textit{knot quandle}, denoted $Q(K)$, 
is the quandle with generators corresponding to arcs in a diagram of $K$
and relations given by the crossings. More precisely, the elements of the 
knot quandle are equivalence classes of quandle words in the generators
under the equivalence relation generated by the crossing relations and the
quandle axioms.

We will find it convenient to specify quandle structures on a finite sets
$X=\{x_1,x_2,\dots,x_n\}$ using an $n\times n$ matrix encoding the
quandle operation table. In particular, the entry in row $i$ column $j$ of
the quandle matrix is $k$ where $x_k=x_i\tr x_j$. Then, for example, the
Alexander quandle structure on $\mathbb{Z}_3=\{1,2,3\}$ (we use 3 for the class
of zero so we can number our rows and columns starting with 1) with
quandle operation $x\tr y= 2x+2y$ has matrix
\[M=\left[\begin{array}{ccc} 
1 & 3 & 2 \\
3 & 2 & 1 \\
2 & 1 & 3 \\
\end{array}
\right].\]

Given a knot $K$ and a finite quandle $X$, the cardinaility of the set of 
quandle homomorphisms (maps $f:Q(K)\to X$ such that $f(x\tr y)=f(x)\tr f(y)$) 
is a computable knot invariant known as the \textit{quandle counting invariant}. A 
quandle 
homomorphism $f:Q(K)\to X$ corresponds to a labeling of the arcs in a diagram
of $K$ with elements of $X$ such that the crossing relations are all satisfied.
For instance, the trefoil knot $3_1$ has nine colorings by the quandle structure
on $\mathbb{Z}_3$ listed above, as shown below:
\[\includegraphics{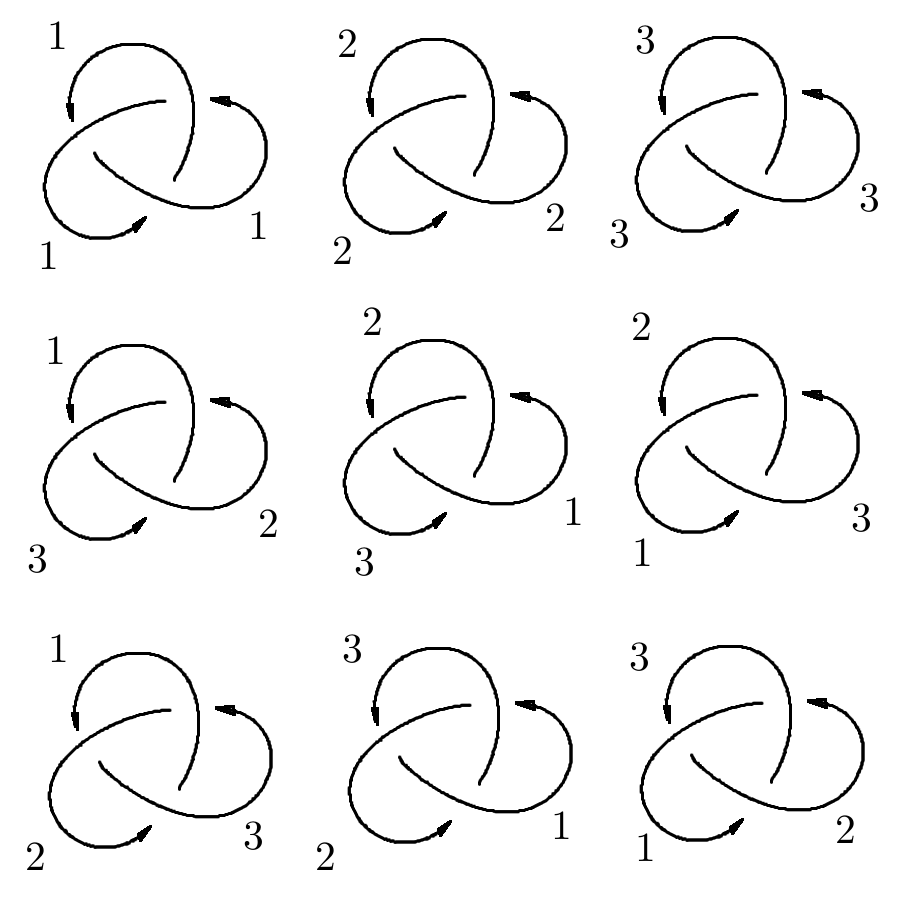}\]
Hence, the quandle counting invariant here is 
$|\mathrm{Hom}(Q(3_1),\mathbb{Z}_3)|=9$.\footnote{The reader may recognize 
these as Fox 3-colorings.}

\section{\large \textbf{Abelian Quandles}}\label{AQ}

We now turn our attention to a special class of quandles known as 
`abelian' quandles.  The reader should be aware that, unlike in the group 
case, the adjective ``abelian" is not synonymous with ``commutative."  Abelian 
quandles satisfy the condition below whereas commutative quandles satisfy 
$a \rhd b = b \rhd a$.

\begin{definition}
\textup{A quandle $Q$ is \textit{abelian} if for all $x,y,z,w\in Q$ we have}
\[(x\tr y)\tr(z\tr w) =(x\tr z)\tr(y\tr w).\]
\textup{Abelian quandles are also called \textit{medial} quandles.}
\end{definition}

\begin{example}
\textup{Alexander quandles are abelian:}
\begin{eqnarray*}
(x\tr y)\tr(z\tr w) & = & t(tx+(1-t)y)+(1-t)(tz+(1-t)w) \\
& = &  t^2x+t(1-t)y+t(1-t)z+(1-t)^2w \\
& = & t(tx+(1-t)z)+(1-t)(ty+(1-t)w) \\
& = & (x\tr z)\tr(y\tr w)
\end{eqnarray*}
\end{example}

\begin{example}
\textup{However, not all abelian quandles are Alexander. The quandle $Q_2$
with operation table 
\[\left[\begin{array}{ccc}
1 & 1 & 2 \\
2 & 2 & 1 \\
3 & 3 & 3\\ 
\end{array}\right]\]
is abelian as can be verified by checking that
$(a\tr b)\tr (c\tr d)=(a\tr c)\tr (b\tr d)$ for $a,b,c,d\in\{1,2,3\}$;
however, $Q_2$ is not isomorphic to any Alexander quandle by the following 
lemma.}
\end{example}

\begin{lemma}
If $Q$ is an Alexander quandle containing an element $y\in Q$ which acts 
trivially on $Q$, i.e. if $x\tr y=x$ for all $x\in Q$, then $Q$ is 
isomorphic to the trivial quandle on $|Q|$ elements.
\end{lemma}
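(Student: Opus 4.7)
The plan is to unpack the definition of an Alexander quandle and exploit the trivial action hypothesis to force the action of $t$ on the underlying $\mathbb{Z}[t^{\pm 1}]$-module to be the identity. Once that is established, the quandle operation collapses to projection onto the first coordinate, which is precisely the trivial quandle structure.

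First, I would write out the hypothesis $x\tr y=x$ using the Alexander quandle formula: for every $x\in Q$ we have $tx+(1-t)y=x$, which rearranges to $(t-1)(x-y)=0$. The next step is the crucial one: as $x$ ranges over $Q$ with $y$ held fixed, the element $x-y$ ranges over all of $Q$, because the additive translation $x\mapsto x-y$ is a bijection of the underlying abelian group of $Q$. Hence $(t-1)\cdot z=0$ for every $z\in Q$, i.e., the action of $t$ on $Q$ coincides with the identity.

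Given this, for arbitrary $a,b\in Q$ I would compute
\[a\tr b=ta+(1-t)b=a+(1-t)b=a+0=a,\]
using $(1-t)b=0$, which shows that $\tr$ is the trivial operation on $Q$ and hence $Q$ is isomorphic (as a quandle) to the trivial quandle on $|Q|$ elements.

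The main obstacle — really the only subtle point — is justifying that $\{x-y:x\in Q\}=Q$, and thus that annihilating every $x-y$ is the same as annihilating all of $Q$. This is immediate once one remembers that an Alexander quandle is by definition a $\mathbb{Z}[t^{\pm 1}]$-module, so its underlying additive structure is a group in which translation by the fixed element $y$ is a bijection; no case analysis or further quandle-theoretic argument is required.
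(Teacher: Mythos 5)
Your proof is correct and follows essentially the same route as the paper: derive $(1-t)(x-y)=0$ from the trivial action, observe that $\{x-y : x\in Q\}$ is all of $Q$ since translation by $y$ is a bijection of the underlying module, conclude $t$ acts as the identity, and hence $x\tr z=x$. The paper phrases the surjectivity step as ``every $x\in Q$ has the form $(x+y)-y$,'' which is the same observation you make.
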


\begin{proof}
Suppose $y$ acts trivially on $Q$, so that $x\tr y= x$ for all $x\in Q$. Then
\[0=x-(x\tr y)= x-tx-(1-t)y=(1-t)(x-y)\]
for all $x\in Q$. In particular, since every $x\in Q$ has the form $x=(x+y)-y$,
the map $(1-t):Q\to Q$ is the zero map, so $t:Q\to Q$ is the identity map. Then
\[x\tr z=tx+(1-t)z= 1x+0z=x\]
and the quandle operation on $Q$ is trivial.
\end{proof}

\begin{example}
\textup{Unlike Alexander quandles, symplectic quandles are generally 
non-abelian. Consider the symplectic quandle structure on $(\mathbb{Z}_2)^2$
defined by
\[\left[\begin{array}{c}
x_1 \\
x_2
\end{array}\right]
\tr
\left[\begin{array}{c}
x_1 \\
x_2
\end{array}\right]
=
\left[\begin{array}{c}
x_1 \\
x_2
\end{array}\right]+
\left(\left[\begin{array}{cc} x_1 & x_2 \end{array}\right]
\left[\begin{array}{cc}
0 & 1 \\
1 & 0
\end{array}\right]
\left[\begin{array}{c}
y_1 \\
y_2
\end{array}\right]\right)
\left[\begin{array}{c}
y_1 \\
y_2
\end{array}\right].
\]
This four-element quandle has operation matrix
\[M=\left[\begin{array}{cccc}
1 & 1 & 1 & 1 \\
2 & 2 & 4 & 3 \\
3 & 4 & 3 & 2 \\
4 & 3 & 2 & 4 \\
\end{array}\right]\]
where
\[x_1=\left[\begin{array}{c} 0 \\ 0\end{array}\right],
x_2=\left[\begin{array}{c} 1 \\ 0\end{array}\right],
x_3=\left[\begin{array}{c} 0 \\ 1\end{array}\right] \quad \mathrm{and}\quad
x_4=\left[\begin{array}{c} 1 \\ 1\end{array}\right].
\]
It is easy to see from the table that this quandle is not
abelian; for instance, we have
\[(2\tr 4)\tr (1\tr 2)= 3\tr 1=3 \]
but 
\[(2\tr 1)\tr (4\tr 2)= 2\tr 3=4\ne 3.\]
We also note that this quandle is a kei, so this example also shows that
kei need not be abelian.
}
\end{example}

\begin{lemma}
In addition to right-distributivity, the operation in an abelian quandle
is left-distributive. 
\end{lemma}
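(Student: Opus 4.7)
The plan is to deduce left-distributivity directly from the medial identity by specializing its four variables so that one pair collapses via idempotence. Concretely, I want to show
\[
x \tr (y \tr z) = (x \tr y) \tr (x \tr z)
\]
for all $x, y, z$ in an abelian quandle $Q$.

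The key observation is that the left-hand side has the form $a \tr (y \tr z)$ where $a = x$, and the right-hand side has the form $(x \tr y) \tr (x \tr z)$. If I write $x = x \tr x$ on the left (using idempotence), it becomes $(x \tr x) \tr (y \tr z)$, which is precisely the left-hand side of the medial identity with the substitution $(x, y, z, w) \mapsto (x, x, y, z)$. Applying mediality then gives $(x \tr y) \tr (x \tr z)$, which is the right-hand side.

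So the proof is a single two-line computation: apply idempotence, then mediality. No case analysis or inductive step is needed, and the argument uses exactly the two axioms (idempotence and the medial identity) that distinguish abelian quandles from general racks. There is no genuine obstacle here; the only thing to be careful about is matching the variables in the medial identity correctly so that the $(x \tr x)$ factor appears on the outside and collapses to $x$, rather than ending up in an interior position where idempotence does not help.
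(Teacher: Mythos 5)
Your argument is correct and is exactly the paper's proof: rewrite $x$ as $x\tr x$ by idempotence and apply the medial identity with the substitution $(x,y,z,w)\mapsto(x,x,y,z)$. Nothing further is needed.
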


\begin{proof}
If $A$ is an abelian quandle, then for all $x,y,z\in A$ we have
\[x\tr(y\tr z) =
(x\tr x)\tr (y\tr z)
=(x\tr y) \tr (x\tr z).\]
\end{proof}

\section{\large \textbf{Hom Quandles}}\label{HQ}

We now wish to study the structure of the set of quandle homomorphisms
$\mathrm{Hom}(Q,A)$ where $Q$ is any quandle and $A$ is an abelian quandle.

%
%

\begin{theorem}
Let $Q$ and $A$ be quandles. If $A$ is abelian, then the set of
quandle homomorphisms $\mathrm{Hom}(Q,A)$ is a quandle under the
pointwise operation $(f\tr g)(q)=f(q)\tr g(q)$.  Moreover, 
$\mathrm{Hom}(Q,A)$ is abelian.
\end{theorem}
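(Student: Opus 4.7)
The plan is to show that the pointwise operation $f\tr g$ defined by $(f\tr g)(q)=f(q)\tr g(q)$ is a binary operation on $\mathrm{Hom}(Q,A)$—that is, it sends pairs of homomorphisms to homomorphisms—and then to observe that the quandle axioms and the medial identity transfer pointwise from $A$. For closure, given $f,g\in\mathrm{Hom}(Q,A)$ and $x,y\in Q$, I would expand both sides of the homomorphism condition:
\[(f\tr g)(x\tr y)=f(x\tr y)\tr g(x\tr y)=(f(x)\tr f(y))\tr(g(x)\tr g(y)),\]
\[(f\tr g)(x)\tr(f\tr g)(y)=(f(x)\tr g(x))\tr (f(y)\tr g(y)).\]
These two expressions agree by mediality of $A$ applied to the four elements $f(x),f(y),g(x),g(y)$; this is the one place where the abelian hypothesis is essential for closure.

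The quandle axioms for $\mathrm{Hom}(Q,A)$ then follow pointwise. Idempotence $(f\tr f)(q)=f(q)\tr f(q)=f(q)$ and self-distributivity are immediate from the corresponding identities in $A$. For the inverse axiom, given $f,g\in\mathrm{Hom}(Q,A)$, the only candidate satisfying $h\tr g=f$ pointwise is $h(q)=f(q)\tr^{-1}g(q)$, and uniqueness of the solution at each point of $A$ gives uniqueness as a function. The main obstacle, as I see it, is verifying that this candidate $h$ is itself a quandle homomorphism, which reduces to the identity
\[(\alpha\tr\beta)\tr^{-1}(\gamma\tr\delta)=(\alpha\tr^{-1}\gamma)\tr(\beta\tr^{-1}\delta)\]
in the abelian quandle $A$. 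I would establish this by applying the bijection $f_{\gamma\tr\delta}$ to both sides: the left-hand side acted on by $\gamma\tr\delta$ returns $\alpha\tr\beta$ by the inverse property, while the right-hand side acted on by $\gamma\tr\delta$ becomes
\[\bigl((\alpha\tr^{-1}\gamma)\tr\gamma\bigr)\tr\bigl((\beta\tr^{-1}\delta)\tr\delta\bigr)=\alpha\tr\beta\]
after one application of mediality. Equality of images under a bijection forces equality of the originals, so $h$ is indeed a homomorphism.

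Finally, mediality of $\mathrm{Hom}(Q,A)$—the identity $(f\tr g)\tr(h\tr k)=(f\tr h)\tr(g\tr k)$—is immediate at each $q\in Q$ from mediality of $A$ applied to $f(q),g(q),h(q),k(q)$, so the hom quandle is itself abelian. The only subtlety in the whole argument is the identity displayed above; everything else is a routine pointwise transfer.
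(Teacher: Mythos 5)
Your proof is correct, and it is actually more complete than the one in the paper. The paper's proof verifies the three quandle axioms and the medial identity pointwise but never explicitly checks \emph{closure} --- that $f\tr g$ and $f\tr^{-1}g$ are again quandle homomorphisms when $f$ and $g$ are; that issue is only addressed informally afterwards via the diagrammatic discussion leading to Proposition \ref{p:main}. You put closure at the center of the argument, correctly identifying it as the one place the abelian hypothesis is genuinely needed: the identity $(f(x)\tr f(y))\tr(g(x)\tr g(y))=(f(x)\tr g(x))\tr(f(y)\tr g(y))$ is exactly an instance of mediality. Your treatment of the inverse axiom is also more careful than the paper's: you isolate the mixed identity $(\alpha\tr\beta)\tr^{-1}(\gamma\tr\delta)=(\alpha\tr^{-1}\gamma)\tr(\beta\tr^{-1}\delta)$ needed to show the pointwise dual of two homomorphisms is a homomorphism, and your proof of it --- apply the bijection $f_{\gamma\tr\delta}$ to both sides, use mediality once, and cancel --- is clean and correct. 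The trade-off is only one of economy: the paper's pointwise verifications are quicker to read, but yours actually establishes that the operations are well defined on the hom set, which is the substantive content of the theorem.
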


\begin{proof}

For Axiom (i), we have
\[(f\tr f)(q)= f(q)\tr f(q)=f(q).\]

For Axiom (ii), define $(f\tr^{-1} g)(q)=f(q)\tr^{-1} g(q)$ in $A$. 
Then we have 
\[((f\tr g)\tr^{-1} g) (q) = (f(q)\tr g(q))\tr^{-1} g(q) = f(q).\]

For Axiom (iii), we have
\begin{eqnarray*}
((f\tr g)\tr h)(q) & = & (f\tr g)(q) \tr h(q) \\
& = & (f(q)\tr g(q))\tr h(q) \\
& = & (f(q)\tr h(q))\tr(g(q)\tr h(q)) \\
& = & (f\tr h)(q)\tr(g\tr h)(q) \\
& = & [(f\tr h)\tr(g\tr h)](q).
\end{eqnarray*}

To show that $\mathrm{Hom}(Q,A)$ is abelian, let 
$f, g, h, k \in \mathrm{Hom}(Q,A)$.  Then

\begin{eqnarray*}
[(f \rhd g) \rhd (h \rhd k)] (q) & = & (f \rhd g) (q) \rhd (h \rhd k) (q) \\
& = & (f(q) \rhd g (q)) \rhd (h(q) \rhd k(q)) \\
& = & (f(q) \rhd h(q)) \rhd (g(q) \rhd k(q)) \\ 
& = &  (f \rhd h)(q) \rhd (g \rhd k)(q) \\
& = & [(f \rhd h) \rhd (g \rhd k)](q)
 \end{eqnarray*}

\end{proof}

\begin{remark} \textup{Henceforth, we will use $\mathrm{Hom}(Q,A)$ to denote the set of quandle homomorphisms and $\mathrm{\bf Hom}(Q,A)$ to denote the quandle.}

\end{remark}

When the domain quandle $Q$ is a knot quandle, we can interpret 
the pointwise quandle operation in terms of knot diagrams. In particular,
$\mathrm{\bf Hom}(Q(K),A)$ can be represented as the set of 
$A$-labelings of a fixed diagram $D$ of $A$; the $\tr$ operation on diagrams
is then given by the pointwise operation on the arc labels, i.e.
\[\includegraphics{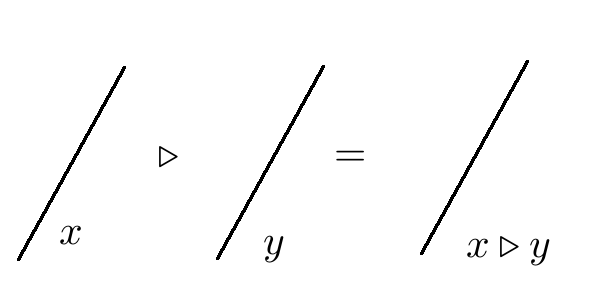}\]

\begin{example}\textup{
In $\mathrm{\bf Hom}(Q(3_1),\mathbb{Z}_3)$ we have
\[\includegraphics{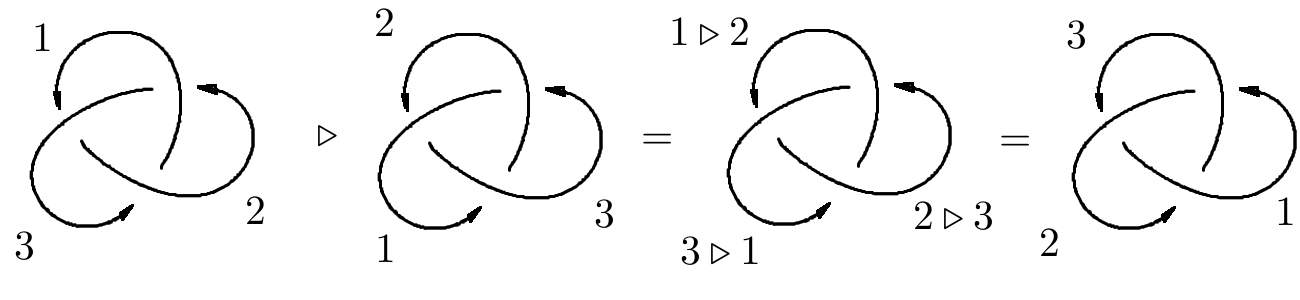}\]
}\end{example}

Then at any crossing, we have
\[\includegraphics{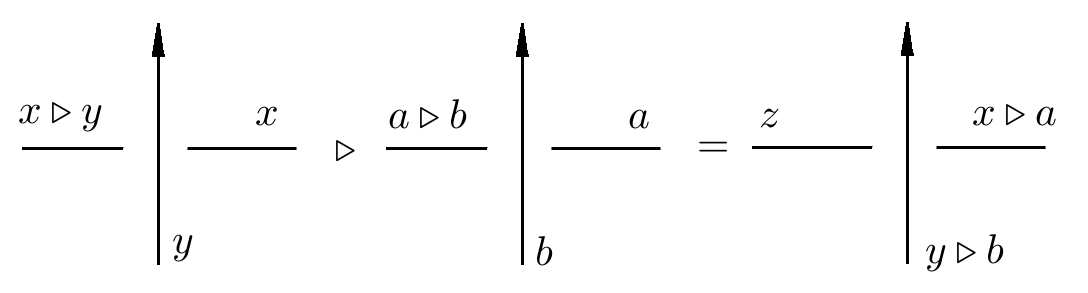}\]
On the one hand, we have $z=(x\tr y)\tr(a\tr b)$ by definition of the $\tr$
operation on diagrams; on the other hand, for the labeling to be a valid
quandle labeling, we must have $z=(x\tr a)\tr(y\tr b)$. Thus, we have

\begin{proposition}\label{p:main}
The set
$\mathrm{Hom}(Q(K),A)$ forms a quandle under the pointwise $\tr$ operation
if and only if $(x\tr y)\tr(a\tr b)=(x\tr a)\tr(y\tr b)$, i.e., if and only if
$A$ is abelian.
\end{proposition}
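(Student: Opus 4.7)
The plan is to handle the two directions of the biconditional separately, with the bulk of the work already done in the preceding text. For the ``if'' direction I would cite the theorem just above: whenever $A$ is abelian, the pointwise operation makes $\mathrm{Hom}(Q,A)$ into a quandle for any quandle $Q$, and the case $Q = Q(K)$ is an immediate specialization.

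For the ``only if'' direction I would formalize the diagrammatic observation immediately preceding the proposition. Fix a crossing in a diagram of $K$ with under-arcs $u$ (incoming) and $v = u \tr w$ (outgoing) beneath overarc $w$, so that in $Q(K)$ the crossing relation reads $v = u \tr w$. For any $f, g \in \mathrm{Hom}(Q(K), A)$, set $x = f(u)$, $y = f(w)$, $a = g(u)$, $b = g(w)$. The pointwise definition forces
\[(f \tr g)(v) = f(v) \tr g(v) = (x \tr y) \tr (a \tr b),\]
while the requirement that $f \tr g$ respect the crossing relation at this same crossing forces
\[(f \tr g)(v) = (f \tr g)(u) \tr (f \tr g)(w) = (x \tr a) \tr (y \tr b).\]
Consistency of these two expressions is exactly the medial identity on the quadruple $(x,y,a,b)$.

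The main obstacle is bridging from ``the medial identity holds on every quadruple arising as $(f(u), f(w), g(u), g(w))$ at some crossing of some diagram'' to the identity holding on all of $A^4$. In the setting of interest --- knots admitting sufficiently many colorings by $A$, so that each of the values $x,y,a,b \in A$ can be realized independently --- this is automatic; alternatively one can replace $Q(K)$ by a free quandle on the relevant generators and read off the identity formally. Either route shows that the pointwise operation sends homomorphisms to homomorphisms precisely when $A$ satisfies the medial law, i.e., precisely when $A$ is abelian, which is the desired biconditional.
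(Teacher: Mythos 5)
Your proposal is correct and follows essentially the same route as the paper: the paper's entire justification is the diagrammatic crossing computation in the paragraph immediately preceding the proposition, equating $(x\tr y)\tr(a\tr b)$ with $(x\tr a)\tr(y\tr b)$ at a crossing, which is exactly what you formalize, with the ``if'' direction delegated to the preceding theorem. In fact you are more careful than the paper, which silently passes from ``the medial identity holds on quadruples realized by colorings at crossings of $K$'' to ``$A$ is abelian''; your explicit acknowledgement that this requires either sufficiently many colorings or reading the identity off formally (e.g.\ over a free quandle) addresses a quantifier issue the paper leaves implicit.
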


The hom quandle $\mathrm{\bf Hom}(Q,A)$ inherits many properties held by
the target quandle $A$.

\begin{theorem}
Let $Q$ and $A$ be quandles, where $A$ is abelian. If $A$ is
 commutative or involutory, then $\mathrm{\bf Hom}(Q,A)$ is also commutative or involutory.
 \end{theorem}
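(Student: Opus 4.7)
The plan is to exploit the pointwise definition of the quandle operation on $\mathrm{\bf Hom}(Q,A)$ and show that each of the two properties (commutativity, involutoriness) transfers from $A$ to the hom quandle essentially for free, just as associativity was transferred in the preceding theorem. Both properties are universal equations in the quandle operation(s), and the pointwise operation automatically preserves any such equation that holds in $A$. So the whole proof is really a single observation applied to two cases.

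For commutativity, I would assume $A$ satisfies $a \tr b = b \tr a$ for all $a,b \in A$, pick arbitrary $f,g \in \mathrm{Hom}(Q,A)$, and evaluate $(f \tr g)(q)$ and $(g \tr f)(q)$ at an arbitrary $q \in Q$. By definition these equal $f(q) \tr g(q)$ and $g(q) \tr f(q)$, which agree by commutativity of $A$. Since $q$ was arbitrary, the functions $f \tr g$ and $g \tr f$ are equal as elements of $\mathrm{\bf Hom}(Q,A)$.

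For the involutory case, assume $(a \tr b) \tr b = a$ for all $a,b \in A$, pick $f,g \in \mathrm{Hom}(Q,A)$, and evaluate $((f \tr g) \tr g)(q)$ at an arbitrary $q$. Unfolding the pointwise operation twice gives $(f(q) \tr g(q)) \tr g(q)$, which equals $f(q)$ by the involutory hypothesis on $A$. Hence $(f \tr g) \tr g = f$, i.e.\ $\mathrm{\bf Hom}(Q,A)$ is involutory.

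I do not expect any genuine obstacle here; both verifications are one-line pointwise calculations of the same flavor as the proof of the preceding theorem, and the abelianness hypothesis on $A$ is needed only to ensure that $\mathrm{\bf Hom}(Q,A)$ is a quandle in the first place (so that these equations are meaningful statements about its operation). The only mild subtlety worth flagging is that in the involutory case one is implicitly using that the inverse operation on $\mathrm{\bf Hom}(Q,A)$ is also the pointwise inverse, which was already established in the proof of the preceding theorem when verifying Axiom (ii).
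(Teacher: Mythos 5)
Your proposal is correct and matches the paper's approach: the paper simply states that the result ``follows from a straightforward computation,'' and the pointwise verifications you spell out (including the observation that the dual operation on $\mathrm{\bf Hom}(Q,A)$ is also pointwise) are exactly that computation.
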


\begin{proof}
This follows from a straightforward computation.
\end{proof}

%
%
%

An additional observation:

\begin{theorem}

Let $Q$ be a quandle and $A \cong A'$ be abelian quandles.  Then {\bf Hom}$(Q, A) \cong$ {\bf Hom}$(Q, A')$.

\end{theorem}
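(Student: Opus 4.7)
The plan is to promote a given quandle isomorphism $\phi : A \to A'$ to an isomorphism of hom quandles by post-composition. Explicitly, I define
\[\Phi : \mathbf{Hom}(Q,A) \to \mathbf{Hom}(Q,A'), \qquad \Phi(f) = \phi \circ f,\]
and set $\Psi(g) = \phi^{-1} \circ g$ as the candidate inverse. Since the composition of two quandle homomorphisms is again a quandle homomorphism, $\Phi(f)$ and $\Psi(g)$ do land in the correct hom sets, so both maps are well defined at the level of underlying sets.

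Next I verify that $\Phi$ respects the pointwise $\tr$ on the hom quandles. For any $f, g \in \mathbf{Hom}(Q,A)$ and any $q \in Q$, applying the pointwise definition from the previous theorem and then the fact that $\phi$ preserves $\tr$ gives
\[\Phi(f \tr g)(q) = \phi\bigl(f(q) \tr g(q)\bigr) = \phi(f(q)) \tr \phi(g(q)) = \bigl(\Phi(f) \tr \Phi(g)\bigr)(q),\]
so $\Phi(f \tr g) = \Phi(f) \tr \Phi(g)$. The identical computation with $\phi^{-1}$ in place of $\phi$ shows that $\Psi$ is also a quandle homomorphism. Bijectivity follows immediately: $\Psi \circ \Phi = \mathrm{id}$ and $\Phi \circ \Psi = \mathrm{id}$ because $\phi^{-1} \circ \phi = \mathrm{id}_A$ and $\phi \circ \phi^{-1} = \mathrm{id}_{A'}$.

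There is no substantive obstacle; the statement is essentially the observation that the construction $\mathbf{Hom}(Q,-)$ is functorial in the second argument when restricted to abelian quandles. The two ingredients that make this go through are (i) the pointwise definition of $\tr$ on the hom quandle, which makes post-composition automatically respect $\tr$ as soon as $\phi$ does, and (ii) the existence of a quandle-theoretic two-sided inverse $\phi^{-1}$. Abelianness of $A$ and $A'$ enters only as the hypothesis guaranteeing that both hom quandles are defined in the first place.
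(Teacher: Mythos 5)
Your proof is correct and is exactly the ``straightforward computation'' the paper leaves to the reader: post-composition with a fixed isomorphism $\phi:A\to A'$ respects the pointwise $\tr$ and has $\phi^{-1}\circ(-)$ as a two-sided inverse. Nothing further is needed.
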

\begin{proof}
This follows from a straightforward computation.
\end{proof}


\begin{theorem}
Let $Q$ be a finitely generated quandle and $A$ a finite 
abelian 
quandle. Then $\mathrm{\bf Hom}(Q,A)$ contains a subquandle isomorphic to $A$.  
\end{theorem}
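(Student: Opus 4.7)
The plan is to exhibit an explicit embedding of $A$ into $\mathrm{\bf Hom}(Q,A)$ via constant maps. For each $a \in A$, define $c_a \colon Q \to A$ by $c_a(q) = a$ for every $q \in Q$. Idempotence in $A$ immediately yields $c_a(x \tr y) = a = a \tr a = c_a(x) \tr c_a(y)$, so each $c_a$ is a quandle homomorphism and hence an element of $\mathrm{\bf Hom}(Q,A)$.

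Next I would verify that the assignment $\Phi \colon A \to \mathrm{\bf Hom}(Q,A)$ defined by $\Phi(a) = c_a$ is an injective quandle homomorphism. By the pointwise definition of the operation on $\mathrm{\bf Hom}(Q,A)$, we have
\[(c_a \tr c_b)(q) = c_a(q) \tr c_b(q) = a \tr b = c_{a \tr b}(q)\]
for every $q \in Q$, so $\Phi(a) \tr \Phi(b) = \Phi(a \tr b)$. For injectivity, if $c_a = c_b$ then evaluating at any fixed $q \in Q$ (which exists because $Q$, being finitely generated, is nonempty) gives $a = b$. Consequently the image $\{c_a : a \in A\}$ is a subquandle of $\mathrm{\bf Hom}(Q,A)$ isomorphic to $A$.

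There is essentially no obstacle here: once the constant-map embedding is singled out, the verifications are a single line each. I would remark that the hypotheses of finite generation on $Q$ and finiteness on $A$ are not strictly required for the embedding itself --- any nonempty quandle $Q$ and any abelian quandle $A$ suffice --- but they fit the paper's setting, where $\mathrm{\bf Hom}(Q,A)$ is deployed as a finite link invariant. Note also that the abelianness hypothesis on $A$ is used only indirectly, via Proposition~\ref{p:main}, to guarantee that $\mathrm{\bf Hom}(Q,A)$ carries a quandle structure in the first place.
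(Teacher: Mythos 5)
Your proof is correct and is essentially identical to the paper's: both embed $A$ into $\mathrm{\bf Hom}(Q,A)$ via constant maps and check that this is an injective quandle homomorphism. Your added observation that finite generation of $Q$ and finiteness of $A$ are not needed for the embedding (only nonemptiness of $Q$) is accurate.
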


\begin{proof} Define maps $f_a:Q\to A$ by $f_a(x)=a$ for all $x\in Q$ and
consider the map $\phi : A \rightarrow \mathrm{\bf Hom}(Q,A)$ 
defined by $\phi(a)=f_a$. First,
note that $\phi$ is a homomorphism of quandles since for any $a,b\in A$ we have
\[\phi(a\tr b) = f_{a\tr b}
\quad\mathrm{and}\quad
\phi(a)\tr \phi(b)=f_a\tr f_b\]
Then for any $x\in Q$, we have
\[(f_a\tr f_b)(x)=a\tr b=f_{a\tr b}(x)\]
as required. Further, $\phi$ is injective since $\phi(a)=\phi(b)$ implies
$f_a=f_b$ which implies $a=b$. Then the image 
subquandle 
$\mathrm{Im}(\phi)\subset \mathrm{\bf Hom}(Q,A)$ is isomorphic to $A$.
\end{proof}


More generally, we have
\begin{theorem}
Let $Q$ be a finitely generated quandle and $A$ an abelian quandle. Then
$\mathrm{\bf Hom}(Q,A)$ is isomorphic to a subquandle of $A^{c}$ where $c$ is
minimal number of generators of $Q$.
\end{theorem}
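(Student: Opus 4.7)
The plan is to realize each homomorphism $f \in \mathrm{\bf Hom}(Q,A)$ by its values on a fixed generating set and show that this evaluation map is an injective quandle homomorphism into the product quandle $A^{c}$.

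First I would fix a generating set $\{q_{1},\dots,q_{c}\}$ for $Q$ of minimal cardinality $c$, and recall (or briefly note) that $A^{c}$ is a quandle under coordinatewise operations; this is standard and does not require $A$ to be abelian. I would then define the evaluation map
\[
\Psi:\mathrm{\bf Hom}(Q,A)\longrightarrow A^{c},\qquad \Psi(f)=(f(q_{1}),\dots,f(q_{c})).
\]
The next step is to check that $\Psi$ is a quandle homomorphism: for any $f,g\in\mathrm{\bf Hom}(Q,A)$ the pointwise definition of $\tr$ on $\mathrm{\bf Hom}(Q,A)$ gives $(f\tr g)(q_{i})=f(q_{i})\tr g(q_{i})$, which is exactly the $i$th coordinate of $\Psi(f)\tr\Psi(g)$ in $A^{c}$. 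This is a one-line verification using the pointwise operation established in the preceding theorem.

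The substantive step is injectivity of $\Psi$. Suppose $\Psi(f)=\Psi(g)$, so that $f$ and $g$ agree on every generator. Every element $q\in Q$ is represented by a quandle word in $q_{1},\dots,q_{c}$ built from $\tr$ and $\tr^{-1}$, and any quandle homomorphism commutes with both operations (for $\tr^{-1}$ this uses Axiom (ii$'$)). An induction on the length of such a word shows $f(q)=g(q)$ for all $q\in Q$, hence $f=g$. Combining injectivity with the homomorphism property shows that $\mathrm{Im}(\Psi)$ is a subquandle of $A^{c}$ isomorphic to $\mathrm{\bf Hom}(Q,A)$.

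The only mild subtlety is the word-induction for injectivity, and in particular making sure that $\tr^{-1}$-words are also preserved; this is immediate from Axiom (ii$'$) since $f(x\tr^{-1}y)\tr f(y)=f((x\tr^{-1}y)\tr y)=f(x)$, forcing $f(x\tr^{-1}y)=f(x)\tr^{-1}f(y)$. Abelianness of $A$ is used only implicitly, through the earlier theorem, to know that $\mathrm{\bf Hom}(Q,A)$ is a quandle at all; the embedding argument itself is purely categorical.
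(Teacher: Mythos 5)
Your proof is correct and follows essentially the same route as the paper: both identify a homomorphism with the tuple of its values on a minimal generating set and observe that the pointwise operation on $\mathrm{\bf Hom}(Q,A)$ corresponds to the componentwise operation on $A^{c}$. You make explicit the injectivity step (a homomorphism is determined on generators, via word induction including $\tr^{-1}$), which the paper leaves implicit, but the underlying argument is the same.
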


\begin{proof}
Let $q_1,\dots,q_c$ be a set of generators of $Q$ with minimal cardinality.
Any homomoprhism $f:Q\to A$ must send each $q_k$ to an element $f(q_k)$ in
$A$, and such an assignment of images to generators defines a quandle 
homomorphism if and only if the relations in $Q$ are satisfied by the assignment, i.e.
if and only if
\[f(q_j\tr q_k)=f(q_j)\tr f(q_k)\]
for all $1\le j,k\le c$.
Then the elements of $\mathrm{\bf Hom}(Q,A)$ can be identified with the subset of 
$A^c$ consisting of $c$-tuples of images of generators under $f$ satisfying the
relations in $Q$, i.e.
\[f\leftrightarrow (f(q_1),f(q_2),\dots,f(q_c)).\]
The pointwise operation in $\mathrm{\bf Hom}(Q,A)$ agrees with the componentwise
operation in the Cartesian product $A^c$,  
\begin{eqnarray*}
((f\tr g)(q_1),\dots,(f\tr g)(q_c)) 
& = &  (f(q_1)\tr g(q_1),\dots, f(q_c)\tr g(q_c)) \\
& = &  (f(q_1),\dots, f(q_c)) \tr  (g(q_1),\dots,g(q_c)) 
\end{eqnarray*}
so $\mathrm{\bf Hom}(Q,A)$ is isomorphic to the subquandle of $A^c$ consisting of
$c$-tuples satisfying the relations of $Q$.
\end{proof}

In the simplest case, we can identify the structure of the hom quandle.
Recall that the \textit{trivial quandle of $n$ elements} is a set $T_n$ of 
cardinality $n$ with quandle operation $x\tr y=x$ for all $x,y\in X$.  That is,
the trivial quandle has quandle matrix 
\[M_{T_n}=\left[\begin{array}{rrrr}
1 & 1 & \dots & 1 \\
2 & 2 & \dots & 2 \\
\vdots & \vdots & \ddots & \vdots \\
n & n & \dots & n
\end{array}\right].\]

\begin{lemma}\label{t-hom}
Any map between trivial quandles is a quandle homomorphism.
\end{lemma}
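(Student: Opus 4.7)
The plan is to verify the homomorphism identity $f(x \tr y) = f(x) \tr f(y)$ directly from the definition of the trivial quandle operation, using nothing more than the fact that the operation $\tr$ on either side is projection onto the first coordinate.

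First I would fix trivial quandles $T_n$ and $T_m$ and an arbitrary set map $f : T_n \to T_m$, with no hypothesis on $f$ other than that it is a function. Then for any $x, y \in T_n$, I would compute the left-hand side: since $T_n$ is trivial, $x \tr y = x$, so $f(x \tr y) = f(x)$. Next I would compute the right-hand side: since $T_m$ is trivial, $f(x) \tr f(y) = f(x)$. Comparing, both sides are literally $f(x)$, so the homomorphism equation holds for every pair $(x,y)$, and hence $f$ is a quandle homomorphism.

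There is essentially no obstacle here; the statement is an immediate consequence of the fact that the trivial quandle operation ignores its second argument on both sides of the equation. The only subtlety worth flagging is that the two trivial quandles need not have the same cardinality, but this plays no role since the argument is pointwise and does not depend on $n$ or $m$.
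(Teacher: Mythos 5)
Your proof is correct and is essentially identical to the paper's: both compute $f(x\tr y)=f(x)$ from triviality of the domain and $f(x)\tr f(y)=f(x)$ from triviality of the codomain, and compare. Nothing further is needed.
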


\begin{proof}
Let $f:T_n\to T_m$ be a map between trivial quandles $T_n$ and $T_m$.
Then for any $x_i,x_j\in X$, we have $f(x_i\tr x_j)=f(x_i)$ and
$f(x_i)\tr f(x_j)=f(x_i)$.  Thus 
\[f(x_i\tr x_j)=f(x_i)=f(x_i)\tr f(x_j)\]
and $f$ is a quandle homomoprhism.
\end{proof}

\begin{theorem}  Let $T_n$ and $T_m$ be the trivial quandles of orders $n$ 
and $m$, respectively.  Then $\mathrm{\bf Hom}(T_n, T_m) \cong T_{m^n}$.
\end{theorem}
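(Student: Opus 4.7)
The plan is to show both that the underlying set of $\mathrm{\bf Hom}(T_n, T_m)$ has size $m^n$ and that its quandle operation is trivial, so that it must be isomorphic to $T_{m^n}$.

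First, I would invoke Lemma \ref{t-hom}: since every set-theoretic map $T_n \to T_m$ is automatically a quandle homomorphism, the set $\mathrm{Hom}(T_n, T_m)$ coincides with the full function set $T_m^{T_n}$. A set of functions from an $n$-element set to an $m$-element set has cardinality $m^n$, so $|\mathrm{Hom}(T_n, T_m)| = m^n$, matching the order of $T_{m^n}$.

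Next, I would check that the pointwise quandle structure on $\mathrm{\bf Hom}(T_n, T_m)$ is the trivial one. For any $f, g \in \mathrm{Hom}(T_n, T_m)$ and any $x \in T_n$, the definition of the pointwise operation together with the fact that $T_m$ is trivial gives
\[(f \tr g)(x) = f(x) \tr g(x) = f(x),\]
so $f \tr g = f$ as maps. Hence the operation $\tr$ on $\mathrm{\bf Hom}(T_n, T_m)$ satisfies $f \tr g = f$ for all $f,g$, i.e., it is the trivial quandle operation.

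Combining these two facts, $\mathrm{\bf Hom}(T_n, T_m)$ is a trivial quandle of order $m^n$, and by choosing any bijection from its underlying set to the underlying set of $T_{m^n}$ one obtains a quandle isomorphism $\mathrm{\bf Hom}(T_n, T_m) \cong T_{m^n}$. There is really no obstacle here; the proof is essentially a two-line verification once Lemma \ref{t-hom} is in hand, and the only minor subtlety is remembering that triviality of the target quandle forces triviality of the pointwise operation regardless of how non-trivial the domain is.
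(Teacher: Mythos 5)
Your proof is correct and follows essentially the same route as the paper: invoke Lemma \ref{t-hom} to count $m^n$ homomorphisms, observe that triviality of $T_m$ makes the pointwise operation trivial, and conclude via a bijection (the paper just happens to write one down explicitly using base-$m$ expansion). No gaps.
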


\begin{proof}
Let $T_n=\{x_1,\dots, x_n\}$ and $T_m=\{y_1,\dots, y_m\}$.
Any map $f:T_n\to T_m$ can be encoded as a vector
\[f=(f(x_1),f(x_2),\dots f(x_n))\]
and there are $m^n$ such maps. Indeed, every such map is quandle
homomorphism by Lemma \ref{t-hom}.

Now, define $\phi:\mathrm{\bf Hom}(T_n,T_m)\to T_{m^n}$ by
\[\phi(y_1,\dots,y_n)=\sum_{k=1}^n y_k m^{k-1}.\]
Then $\phi$ is a bijection; the inverse map rewrites
$x\in \{1,2,\dots,m^n\}$ in base-$m$. 
The quandle structure on $\mathrm{\bf Hom}(T_n,T_m)$ is defined by 
\[(f\tr g)(x)=f(x)\tr g(x)=f(x),\]
so we have $f\tr g=f$ for all $f,g\in\mathrm{\bf Hom}(T_n,T_m)$ and
$\mathrm{\bf Hom}(T_n,T_m)$ is a trivial quandle. Then by Lemma \ref{t-hom},
$\phi$ is an isomorphism of quandles.
\end{proof}

\begin{remark}\textup{
The condition that $\mathrm{\bf Hom}(Q,A)\cong A^c$ where $c$ is the minimal number
of generators of $A$ is not limited to trivial quandles.  For instance, the 
quandle $\mathrm{\bf Hom}(Q(3_1),R_3)$ is isomorphic to $(R_3)^2$, where 
$Q(3_1)$ is the fundamental quandle of the trefoil knot and 
$R_3=\mathbb{Z}_3[t]/(t-2)$ is the connected quandle of 3 elements; we note 
that $Q(3_1)$ has a presentation with two generators (as do all 2-bridge knots).
In the next 
section, we show that $\mathrm{\bf Hom}(Q,A)$ need not be isomorphic to $A^c$
for links with quandle generator index $c$. }
\end{remark}

\section{\large \textbf{Hom Quandle Enhancement}} \label{HQE}

Recall that for any oriented knot $K$ and finite quandle $A$, the cardinality 
of the hom set $\mathrm{Hom}(Q(K),A)$ is a computable knot invariant. As we 
have seen, if $A$ is abelian then the hom set is not just a set but a quandle. 
The natural question is then whether the hom quandle is a stronger invariant
than the counting invariant. In general, an invariant which determines the 
counting invariant is an \textit{enhancement} of the counting invariant, and if
there are examples in which the enhancement distinguishes knots or links which
have the same counting invariant, we say the enhancement is a \textit{proper
enhancement}. Thus, we would like to know whether the hom quandle is a proper 
enhancement. It turns out, the answer is yes:

\begin{example}\textup{Let $A$ be the quandle defined by the quandle matrix
\[M_a=\left[\begin{array}{cccc}
1 & 4 & 4 & 1 \\
3 & 2 & 2 & 3 \\
2 & 3 & 3 & 2 \\
4 & 1 & 1 & 4
\end{array}\right]\]
and consider the links $L6a1$ and $L6a5$ on the Thistlethewaite link table on
the knot atlas \cite{KA}.
\[\begin{array}{cc}
\includegraphics{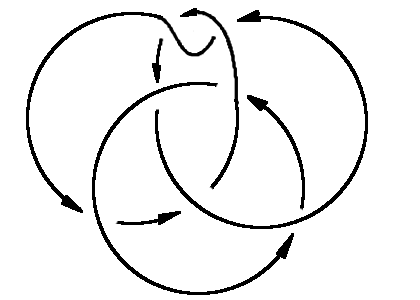} & \includegraphics{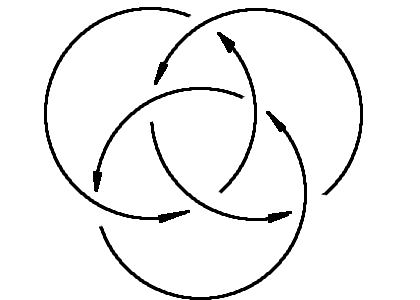} \\
L6a1 & L6a5 \\
\end{array}\]
Our \texttt{Python} computations show that while both hom quandles have the
same cardinality, namely 16, the hom quandles are not isomorphic.
\[M_{\mathrm{\bf Hom}(Q(L6a1),A)}=
\left[\begin{array}{cccccccccccccccc}
1 & 1 & 2 & 2 & 16 & 16 & 15 & 15 & 15 & 15 & 16 & 16 & 2 & 2 & 1 & 1 \\
2 & 2 & 1 & 1 & 15 & 15 & 16 & 16 & 16 & 16 & 15 & 15 & 1 & 1 & 2 & 2 \\
4 & 4 & 3 & 3 & 13 & 13 & 14 & 14 & 14 & 14 & 13 & 13 & 3 & 3 & 4 & 4 \\
3 & 3 & 4 & 4 & 14 & 14 & 13 & 13 & 13 & 13 & 14 & 14 & 4 & 4 & 3 & 3 \\
12 & 12 & 11 & 11 & 5 & 5 & 6 & 6 & 6 & 6 & 5 & 5 & 11 & 11 & 12 & 12 \\
11 & 11 & 12 & 12 & 6 & 6 & 5 & 5 & 5 & 5 & 6 & 6 & 12 & 12 & 11 & 11 \\
9 & 9 & 10 & 10 & 8 & 8 & 7 & 7 & 7 & 7 & 8 & 8 & 10 & 10 & 9 & 9 \\
10 & 10 & 9 & 9 & 7 & 7 & 8 & 8 & 8 & 8 & 7 & 7 & 9 & 9 & 10 & 10 \\
7 & 7 & 8 & 8 & 10 & 10 & 9 & 9 & 9 & 9 & 10 & 10 & 8 & 8 & 7 & 7 \\
8 & 8 & 7 & 7 & 9 & 9 & 10 & 10 & 10 & 10 & 9 & 9 & 7 & 7 & 8 & 8 \\
6 & 6 & 5 & 5 & 11 & 11 & 12 & 12 & 12 & 12 & 11 & 11 & 5 & 5 & 6 & 6 \\
5 & 5 & 6 & 6 & 12 & 12 & 11 & 11 & 11 & 11 & 12 & 12 & 6 & 6 & 5 & 5 \\
14 & 14 & 13 & 13 & 3 & 3 & 4 & 4 & 4 & 4 & 3 & 3 & 13 & 13 & 14 & 14 \\
13 & 13 & 14 & 14 & 4 & 4 & 3 & 3 & 3 & 3 & 4 & 4 & 14 & 14 & 13 & 13 \\
15 & 15 & 16 & 16 & 2 & 2 & 1 & 1 & 1 & 1 & 2 & 2 & 16 & 16 & 15 & 15 \\
16 & 16 & 15 & 15 & 1 & 1 & 2 & 2 & 2 & 2 & 1 & 1 & 15 & 15 & 16 & 16
\end{array}\right]\]
\[M_{\mathrm{\bf Hom}(Q(L6a5),A)}=
\left[\begin{array}{cccccccccccccccc}
1 & 1 & 1 & 1 & 16 & 16 & 16 & 16 & 16 & 16 & 16 & 16 & 1 & 1 & 1 & 1 \\
2 & 2 & 2 & 2 & 15 & 15 & 15 & 15 & 15 & 15 & 15 & 15 & 2 & 2 & 2 & 2 \\
3 & 3 & 3 & 3 & 14 & 14 & 14 & 14 & 14 & 14 & 14 & 14 & 3 & 3 & 3 & 3 \\
4 & 4 & 4 & 4 & 13 & 13 & 13 & 13 & 13 & 13 & 13 & 13 & 4 & 4 & 4 & 4 \\
12 & 12 & 12 & 12 & 5 & 5 & 5 & 5 & 5 & 5 & 5 & 5 & 12 & 12 & 12 & 12 \\
11 & 11 & 11 & 11 & 6 & 6 & 6 & 6 & 6 & 6 & 6 & 6 & 11 & 11 & 11 & 11 \\
10 & 10 & 10 & 10 & 7 & 7 & 7 & 7 & 7 & 7 & 7 & 7 & 10 & 10 & 10 & 10 \\
9 & 9 & 9 & 9 & 8 & 8 & 8 & 8 & 8 & 8 & 8 & 8 & 9 & 9 & 9 & 9 \\
8 & 8 & 8 & 8 & 9 & 9 & 9 & 9 & 9 & 9 & 9 & 9 & 8 & 8 & 8 & 8 \\
7 & 7 & 7 & 7 & 10 & 10 & 10 & 10 & 10 & 10 & 10 & 10 & 7 & 7 & 7 & 7 \\
6 & 6 & 6 & 6 & 11 & 11 & 11 & 11 & 11 & 11 & 11 & 11 & 6 & 6 & 6 & 6 \\
5 & 5 & 5 & 5 & 12 & 12 & 12 & 12 & 12 & 12 & 12 & 12 & 5 & 5 & 5 & 5 \\
13 & 13 & 13 & 13 & 4 & 4 & 4 & 4 & 4 & 4 & 4 & 4 & 13 & 13 & 13 & 13 \\
14 & 14 & 14 & 14 & 3 & 3 & 3 & 3 & 3 & 3 & 3 & 3 & 14 & 14 & 14 & 14 \\
15 & 15 & 15 & 15 & 2 & 2 & 2 & 2 & 2 & 2 & 2 & 2 & 15 & 15 & 15 & 15 \\
16 & 16 & 16 & 16 & 1 & 1 & 1 & 1 & 1 & 1 & 1 & 1 & 16 & 16 & 16 & 16
\end{array}\right]\]
Recall from \cite{QP} that if $X$ is a quandle, then the polyomial
\[\phi(X)=\sum_{x\in X} s^{r(x)}t^{c(x)}\]
where $r(x)=|\{y\in X\ :\ x\tr y=x\}|$ and $c(x)=|\{y\in X\ :\ y\tr x=y\}|$ is
an invariant of quandle isomorphism type known as the \textit{quandle 
polynomial} of $X$. Then we have
\[\phi(\mathrm{\bf Hom}(Q(L6a1),A))= 16s^4t^4\ne
 16s^8t^8=\phi(\mathrm{\bf Hom}(Q(L6a5),A))\]
and the quandles are not isomorphic.
}\end{example}

\section{\large \textbf{Abelian Biquandles}}\label{BQ}

If we think of quandles as the algebraic structure encoding the oriented
Reidemeister moves where the inbound overcrossing arc act on the inbound
undercrossing arc at a crossing, it natural to ask what algebraic structure
results when we allow both inbound \textit{semiarcs}, i.e. portions of the knot divided at both over and undercrossings, to act on each other at a crossing.
The resulting algebraic structure is known as a \textit{biquandle}; see
\cite{K}. More precisely, we have

\begin{definition}\textup{Let $X$ be a set and define $\Delta:X\to X\times X$
by $\Delta(x)=(x,x)$. A \textit{biquandle map}
on $X$ is an invertible map $B:X\times X\to X\times X$ denoted
\[B(x,y)=(B_1(x,y),B_2(x,y))=(y^x,x_y)\]
such that
\begin{itemize}
\item[(i)] There exists a unique invertible \textit{sideways map} $S:X\times X\to X\times X$ such that for all $x,y\in X$, we have
\[S(B_1(x,y),x)=(B_2(x,y),y);\]
\item[(ii)] The component map $(S\Delta)_{1}=(S\Delta)_2:X\to X$ is
bijective, and
\item[(iii)] $B$ satisfies the \textit{set-theoretic Yang-Baxter equation}
\[(B\times I)(I\times B)(B\times I)=(I\times B)(B\times I)(I\times B).\]
\end{itemize} 
A \textit{biquandle} is a set $X$ with a choice of biquandle map $B$.
}\end{definition}

\begin{example}\textup{
Examples of biquandles include
\begin{itemize}
\item \textit{Constant Action Biquandles}. For any set $X$ and bijection
$\sigma:X\to X$, the map $B(x,y)=(\sigma(y),\sigma^{1}(x))$ is a biquandle
map.
\item \textit{Alexander Biquandles.} For any module $X$ over 
$\mathbb{Z}[t^{\pm 1}, r^{\pm 1}]$, the map
\[B(\vec{x},\vec{y})=((1-tr)\vec{x}+t\vec{y},r\vec{x})\]
is a biquandle map.
\item \textit{Fundamental Biquandle of an oriented Link}. Given an 
oriented link diagram $L$, let $G$ be a set of generators corresponding 
bijectively with semiarcs (portions of the link divided at both over and
undercrossing points) in $L$, and define the set of \textit{biquandle words}
$W$ recursively by the rules
\begin{itemize}
\item $G\subset W$ and
\item If $x,y\in W$ then $B_{1,2}^{\pm 1}(x,y),S_{1,2}^{\pm 1}(x,y)\in W$.
\end{itemize}
Then the \textit{fundamental biquandle} of $L$, denoted $B(L)$, is the set of 
equivalence 
classes of $W$ under the equivalence relation generated by the biquandle 
axioms and the \textit{crossing relations} in $L$:
\[\includegraphics{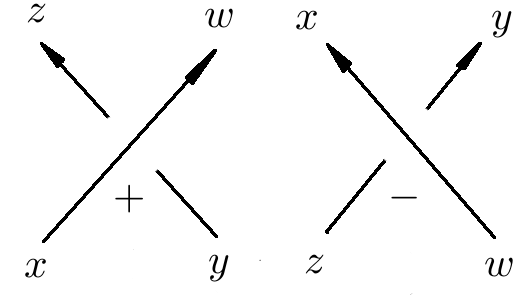}\quad\raisebox{0.5in}{$z=y^x,\ w=x_y$}\]
\item An $n\times 2n$ matrix $M$ with entries in $X=\{1,2,\dots, n\}$ can be 
interpreted as a pair of operation tables, say with $B(i,j)=(M[j,i], M[i,j+n])$.
Then such a matrix defines a biquandle structure on $X$ provided the entries
satisfy the biquandle axioms.
\end{itemize}
}\end{example}

We can generalize the abelian property from quandles to biquandles $X$ by
requiring that the set of biquandle homomorphsims $\mathrm{Hom}:B(L)\to X$
forms a biquandle under the diagrammatic operations analogous to the quandle 
case. 
\[\includegraphics{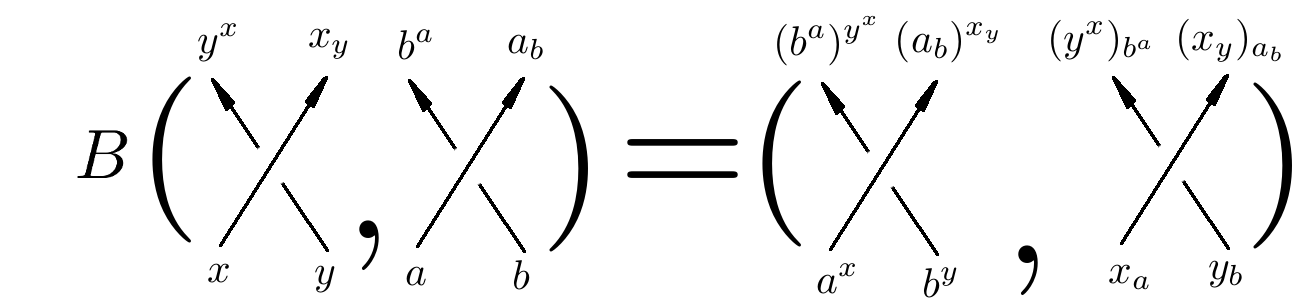}\]
More precisely, we have
\begin{definition}\textup{
We say a biquandle map $B:X\times X\to X\times X$ is \textit{abelian} if for 
all $a,b,x,y\in X$ we have
\[(b^a)^{y^x}=(b^y)^{a^x}, \quad
(a_b)^{x_y}=(a^x)_{b^y}, \quad
\quad\mathrm{and}\quad
(x_y)_{a_b}=(x_a)_{y_b}.
\]
Note that two of the four conditions determined in the diagram, namely
$(a_b)^{x_y}=(a^x)_{b^y}$ and $(y^x)_{b^a}=(y_b)^{x_a}$, are equivalent.
}\end{definition}

\begin{example}\textup{
Alexander biquandles are abelian, as we can verify directly.
\begin{eqnarray*}
(b^a)^{y^x} & = & t(b^a)+(1-tr)(y^x) \\
& = &  t(tb+(1-tr)a)+ (1-tr)(ty+(1-tr)x) \\
& = &  t^2b+t(1-tr)a+ t(1-tr)y+(1-tr)^2x \\
& = &  t^2b+t(1-tr)y+ t(1-tr)a+(1-tr)^2x \\
& = & (b^y)^{a^x}, \\
& & \\
(a_b)^{x_y} & = & t(a_b)+(1-tr)(x_y) \\
& = & tra+(1-tr)(rx) \\
& = & r(ta+(1-tr)x) \\
& = & (a^x)_{b^y},
\end{eqnarray*}
and
\[(x_y)_{a_b}=r(x_y)=r^2x=r(x_a)=(x_a)_{y_b}.\]
}\end{example}

As with quandles, we have
\begin{proposition}
If $Y$ is a biquandle and  $X$ is an abelian biquandle, then the set of 
biquandle homomorphisms
$\mathrm{Hom}(Y,X)$ has a biquandle structure defined by
\[f^g(x)= f(x)^{g(x)}\quad\mathrm{and}\quad  g_f(x)=g(x)_{f(x)}.\]
\end{proposition}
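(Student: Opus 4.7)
The plan is to mirror the quandle argument (cf.\ Proposition~\ref{p:main}) by verifying each biquandle axiom pointwise. Set $B^{\mathrm{h}}(f,g):=(f^g,g_f)$ on $\mathrm{Hom}(Y,X)\times\mathrm{Hom}(Y,X)$ using the stated formulas, and define a candidate sideways map $S^{\mathrm{h}}$ on pairs of homomorphisms componentwise from the sideways map of $X$.

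The essential step, and the only place where abelianness is used, is showing that $f^g$ and $g_f$ are again biquandle homomorphisms $Y\to X$. For the over-crossing check on $f^g$, given $y_1,y_2\in Y$, the left side of the required identity $(f^g)(y_1^{y_2})=(f^g)(y_1)^{(f^g)(y_2)}$ expands, using that $f$ and $g$ are homomorphisms, as
\[(f^g)(y_1^{y_2})=f(y_1^{y_2})^{g(y_1^{y_2})}=\bigl(f(y_1)^{f(y_2)}\bigr)^{g(y_1)^{g(y_2)}},\]
while the right side equals $\bigl(f(y_1)^{g(y_1)}\bigr)^{f(y_2)^{g(y_2)}}$. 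Writing $a=f(y_1)$, $b=g(y_1)$, $x=f(y_2)$, $y=g(y_2)$, these two expressions become $(a^x)^{b^y}$ and $(a^b)^{x^y}$, which agree by the first abelian axiom after relabeling. The analogous three checks for $(f^g)((y_1)_{y_2})$, $(g_f)(y_1^{y_2})$, and $(g_f)((y_1)_{y_2})$ reduce respectively to the second abelian axiom in the form stated, to its equivalent form $(y^x)_{b^a}=(y_b)^{x_a}$ noted in the definition, and to the third abelian axiom; the analogous homomorphism checks for the two components of $S^{\mathrm{h}}$ follow the same pattern from the sideways-map versions.

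Once $f^g$ and $g_f$ are known to land in $\mathrm{Hom}(Y,X)$, the remaining biquandle axioms transport pointwise from $X$. Invertibility of $B^{\mathrm{h}}$ follows by applying $B^{-1}$ at each $y\in Y$; the sideways relation $S^{\mathrm{h}}(B^{\mathrm{h}}_1(f,g),f)=(B^{\mathrm{h}}_2(f,g),g)$ holds because the corresponding relation holds in $X$ at every $y$; bijectivity of $(S^{\mathrm{h}}\Delta)_1$ follows from its bijectivity on $X$; and the set-theoretic Yang-Baxter equation on $\mathrm{Hom}(Y,X)^3$ reduces to the one on $X^3$ evaluated pointwise. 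The main obstacle is the bookkeeping in the second paragraph: matching each of the four homomorphism-preservation checks for $B^{\mathrm{h}}$, together with the analogous checks for $S^{\mathrm{h}}$, to the correct abelian axiom under an explicit relabeling, with the noted equivalence of the two forms of the second abelian condition playing a nontrivial role. Everything outside of this reduction is routine pointwise transport from $X$.
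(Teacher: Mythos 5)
Your proof is correct in its essentials and follows exactly the route the paper intends (the paper in fact states this proposition with no proof at all, introducing it only with ``As with quandles, we have''). Your identification of which abelian condition handles which of the four homomorphism checks is right: with your labels the over--over check needs $(a^x)^{b^y}=(a^b)^{x^y}$, which is the first axiom relabeled; the over--under check is literally $(a_b)^{x_y}=(a^x)_{b^y}$; the under--over check is the noted equivalent form $(y^x)_{b^a}=(y_b)^{x_a}$; and the under--under check is $(x_y)_{a_b}=(x_a)_{y_b}$. The one place you are too quick is the closure of $\mathrm{Hom}(Y,X)$ under the \emph{inverse} and \emph{sideways} operations: saying that invertibility of $B^{\mathrm{h}}$ ``follows by applying $B^{-1}$ at each $y$'' gives injectivity, but surjectivity onto $\mathrm{Hom}(Y,X)^2$ requires knowing that the pointwise $B^{-1}$-preimage of a pair of homomorphisms again consists of homomorphisms, and similarly for $S^{\mathrm{h}}$. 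These facts are true but need the analogues of the abelian identities for $B^{-1}$ and $S$, which must be derived from the stated ones (just as, in the quandle case, one checks that $\tr^{-1}$ inherits distributivity from $\tr$); you gesture at this for $S^{\mathrm{h}}$ but do not carry it out. Since the paper supplies no argument whatsoever, your write-up is already more complete than the source; filling in that one closure step would make it airtight.
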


In particular, if $X$ is a finite abelian biquandle, then the hom
biquandle $\mathrm{\bf Hom}(B(L),X)$ is a link invariant which determines,
but is not determined by, the biquandle counting invariant.

\begin{example}\textup{
Consider the biquandle with operation matrix
\[M_X=\left[\begin{array}{ccccc|ccccc}
3 & 3 & 3 & 1 & 1 & 1 & 1 & 1 & 1 & 1 \\
1 & 1 & 1 & 2 & 2 & 2 & 2 & 2 & 2 & 2 \\
2 & 2 & 2 & 3 & 3 & 3 & 3 & 3 & 3 & 3 \\
5 & 5 & 5 & 5 & 5 & 4 & 4 & 4 & 5 & 5 \\
4 & 4 & 4 & 4 & 4 & 5 & 5 & 5 & 4 & 4 \\
\end{array}\right].\]
Our \texttt{Python} computations indicate that both the $(4,2)$-torus link
$L4a1$ and the Whitehead link $L5a1$ have biquandle counting invariant
value $|\mathrm{Hom}(B(L4a1),X)|=81=|\mathrm{Hom}(B(L5a1),X)|$ with respect
to $X$, but $\mathrm{\bf Hom}(B(L4a1),X)$ and $\mathrm{\bf Hom}(B(L5a1),X)$ are not 
isomorphic as biquandles.  Indeed, they have distinct upper biquandle 
polynomials
\[\phi_1(\mathrm{\bf Hom}(B(L4a1),X))=12s^{77}t^{77} + 4s^{77}t^{68} + 40s^{71}t^{71} + 9s^{68}t^{72} + 16s^{65}t^{65}\]
and \[\phi_1(\mathrm{\bf Hom}(B(L5a1),X))=16s^{77}t^{77} + 40s^{71}t^{71} + 4s^{65}t^{56} + 9s^{56}t^{60} + 12s^{56}t^{56}\]
respectively where
\[\phi_1(X)=\sum_{x\in X} s^{r(x)}t^{c(x)}\]
where $r(x)=|\{y\in X\ :\ x^y=x\}|$ and $c(x)=|\{y\in X\ :\ y_x=y\}|$.
\[
\begin{array}{cc}
\includegraphics{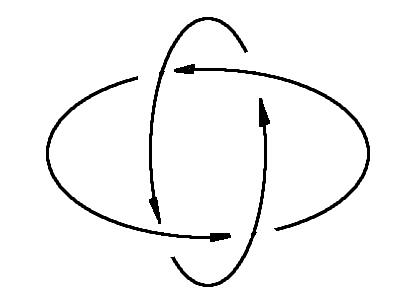} & \includegraphics{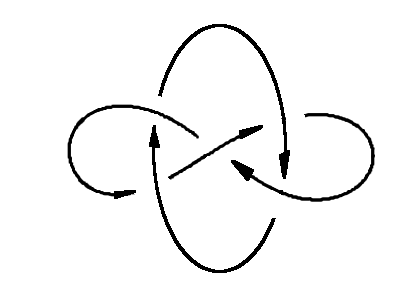} \\
L4a1 & L5a1 \\
\end{array}
\]
}\end{example}

\section{\large \textbf{Categorical Framework}}\label{C}

Since we know that the collection of homomorphisms between two abelian quandles forms an abelian quandle, a natural question to ask is whether the category of abelian quandles is a `symmetric monoidal closed' category.  In this section, we show that the answer to this question is yes.

Recall that a symmetric monoidal category $C$ is \textit{closed} if for any object $Q \in C$, the functor $ - \otimes Q: C \rightarrow C$ has a right adjoint.  We will denote this right adjoint by $\mathrm{\bf Hom}(Q,-)$ and the adjointness condition then says:
$$\textrm{Hom}(P \otimes Q, R) = \textrm{Hom}(P, \mathrm{\bf Hom}(Q,R)) \qquad (\ast) $$
for all $P,Q,R \in C$ and where ``=" means natural isomorphism.  The right adjoint $\mathrm{\bf Hom}(-,-)$ is uniquely determined by adjointness and defines a functor $C^{op} \times C \rightarrow C$.

\subsection{Tensor Product of Abelian Quandles}

Let $Q$ and $A$ be abelian quandles.  We define the {\it tensor product}, $Q \otimes A$, to be the free quandle on the set $Q \times A$ quotiented out by the relations 
$$(q, a_1) \rhd (q, a_2) = (q, a_1 \rhd a_2) \qquad \textrm{and} \qquad (q_1, a) \rhd (q_2, a) = (q_1 \rhd q_2, a)$$
for $q, q_1, q_2 \in Q$ and $a, a_1, a_2 \in A$.  For abelian quandles $X$, a homomorphism $Q \otimes A \rightarrow X$ is essentially the same thing as a \textit{bihomomorphism}, that is, a function $f: Q \times A \rightarrow X$ such that:
\begin{itemize}
\item $f(q, -): A \rightarrow X$ is a homomorphism for each $q \in Q$, and
\item $f(-, a): Q \rightarrow X$ is a homomorphism for each $a \in A$.

\end{itemize}
The unit for this tensor product is the one-element quandle $1$, which can be checked directly.  We note that, in principle, the unit is actually the free quandle on a single generator, but that is, in fact, the one-element quandle due to the first quandle axiom.

We remark that this situation works very much as it does for modules.  The key point is that both the theory of abelian quandles and the theory of modules are `commutative' theories.  We recall that a \textit{commutative theory} is an algebraic theory such that each operation of the theory is a homomorphism.  For example, the theory of abelian groups is commutative because for any abelian group $G$, the map $+: G \times G \rightarrow G$ is a homomorphism.  More explicitly, in a commutative theory, given any $n$-ary operation $\alpha$ and any $m$-ary operation $\beta$, the equation
$$\alpha(\beta(x_{11}, \ldots, x_{1m}), \ldots , \beta(x_{n1}, \ldots, x_{nm})) = \beta(\alpha(x_{11}, \ldots, x_{n1}), \ldots , \alpha(x_{1m}, \ldots, x_{nm}))$$
holds.  Since the theory of quandles only has one operation $\rhd$, all we need is the equation
$$(x_{11} \rhd x_{12}) \rhd (x_{21} \rhd x_{22}) = (x_{11} \rhd x_{21}) \rhd (x_{12} \rhd x_{22}),$$
and this is precisely what the definition of abelian quandle guarantees.

\subsection{Category of Abelian Quandles}
By Theorem \ref{p:main}, we know that given abelian quandles $Q$ and $A$,  the set Hom$(Q,A)$ becomes an abelian quandle under pointwise operations.  Indeed, $\textrm{Hom}(Q, A)$ is the underlying set of $\textrm{\bf Hom}(Q,A)$.  It is not difficult to show that a homomorphism $Q \otimes A \rightarrow X$ is essentially the same thing as a homomorphism $Q \rightarrow \textrm{\bf Hom}(A, X)$.  This fact, more formally, gives us the answer to the question raised at the beginning of this section:

\begin{theorem}
The category of abelian quandles is symmetric monoidal closed under the tensor product $\otimes$ and closed structure \textup{\textbf{Hom}(-,-)} defined above.
\end{theorem}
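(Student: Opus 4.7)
The plan is to exploit the observation, already emphasized in the excerpt, that the theory of abelian quandles is a commutative algebraic theory: the single binary operation $\rhd$ is itself a quandle homomorphism out of the product quandle. A classical theorem due to Linton (see Borceux's \emph{Handbook of Categorical Algebra}) asserts that the category of models of any commutative algebraic theory is automatically symmetric monoidal closed, with pointwise operations on hom sets providing the internal hom and a presentation by generators and relations of the shape used in the excerpt providing the tensor product. So one short route is to cite this general result and then verify that our $\otimes$ and $\mathrm{\bf Hom}(-,-)$ coincide on the nose with those produced by the general construction.

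For a self-contained argument, I would organize everything around the notion of a \emph{bihomomorphism}. First I would check that $Q \otimes A$ really is an abelian quandle: mediality is inherited from the ambient category of abelian quandles once one takes the free \emph{abelian} quandle on $Q\times A$, and the bilinearity relations are well-posed because they are symmetric in the two slots. Next I would establish the universal property that a homomorphism $Q\otimes A\to X$ into an abelian quandle is the same data as a bihomomorphism $Q\times A\to X$; this is immediate from the universal property of the free abelian quandle restricted to bilinear maps. The adjunction $\mathrm{Hom}(P \otimes Q, R) \cong \mathrm{Hom}(P, \mathrm{\bf Hom}(Q, R))$ then follows by identifying both sides with the set of bihomomorphisms $P\times Q\to R$: on the left by the universal property of $\otimes$, and on the right by currying, where $\varphi:P\to\mathrm{\bf Hom}(Q,R)$ corresponds to $(p,q)\mapsto \varphi(p)(q)$. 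Because the quandle operation on $\mathrm{\bf Hom}(Q,R)$ is defined pointwise, $\varphi$ being a homomorphism translates exactly to the uncurried map being a bihomomorphism, and naturality in all three variables is immediate.

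Finally, for the symmetric monoidal structure I would define the symmetry, unit isomorphism, and associator on generators by $(q,a)\mapsto(a,q)$, $(\ast,q)\mapsto q$, and the canonical reassociation, respectively, checking in each case that the bilinearity relations are preserved so that the maps extend to the quotient. The pentagon and triangle coherence diagrams then reduce to identities on generators. The main obstacle throughout is the very first step: confirming that the generators-and-relations presentation really does produce an abelian quandle equipped with the stated universal property, and in particular that no unexpected collapses occur. Once that bookkeeping is settled everything else is formal, which is precisely why the commutative-theory machinery is attractive as an alternative.
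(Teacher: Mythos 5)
Your first paragraph is exactly the paper's proof: the authors simply observe that the theory of abelian quandles is commutative (the mediality axiom says precisely that $\rhd$ is a bihomomorphism) and invoke Linton's theorem on commutative equational theories. The additional self-contained sketch via bihomomorphisms and currying is a correct elaboration of what that citation packages up, but the core argument matches the paper.
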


\begin{proof}
This follows from the main theorem of Linton \cite{L} since the theory of abelian quandles is commutative.
\end{proof}

We note that this means the category of abelian quandles can be enriched over itself, and the adjointness condition $(\ast)$ is a natural isomorphism of quandles
$$\textrm{\bf Hom}(Q \otimes A, X) = \textrm{\bf Hom}(Q, \textrm{\bf Hom}(A,X)).$$.

\section{\large \textbf{Questions for Future Research}}\label{Q}

In this section we collect a few questions for future research.

In \cite{J}, Joyce shows that the fundamental abelian quandle of a classical
knot determines, and is determined by, the fundamental Alexander quandle of 
the knot. Is the analogous statement true for Alexander biquandles?

What other properties does the hom quandle, Hom$(Q,A)$ inherit from the quandles $Q$ and $A$?  For example, does it inherit the properties of being connected, dihedral, Core or conjugation?  Given connected quandles, is it true that the hom quandle structure is determined by the counting invariant?  

Moreover, what is the relationship (if any) between the cardinalities of $Q$, $A$, and Hom$(Q,A)$?  How is the homology of $Q \otimes A$ related to the homologies of $Q$ and $A$?  Under the conjugation or Core functors, the braid group becomes a quandle.  Is this `braid quandle' abelian?

\bigskip

\noindent
\textsc{Department of Mathematics\\
Loyola Marymount University\\
One LMU Drive, Suite 2700\\
Los Angeles, CA 90045}



\bigskip

\noindent
\textsc{Department of Mathematical Sciences\\ 
Claremont McKenna College \\
850 Colubmia Ave.\\
Claremont, CA 91711}

\end{document}